\newtheorem{theorem}{Theorem}[section]
\newtheorem{lemma}{Lemma}[section]
\newtheorem{proposition}{Proposition}[section]
\newtheorem{remark}{Remark}[section]
\def\BibTeX{{\rm B\kern-.05em{\sc i\kern-.025em b}\kern-.08em
    T\kern-.1667em\lower.7ex\hbox{E}\kern-.125emX}}
\begin{document}
\title{``Second-Order Primal'' + ``First-Order Dual'' Dynamical Systems with Time Scaling  for Linear Equality Constrained Convex Optimization Problems }

\author{Xin He,  Rong Hu  and  Ya-Ping Fang
\thanks{This work was supported by  the National Natural Science  Foundation of China (11471230).}
\thanks{Xin He is with Department of Mathematics, Sichuan University, Chengdu, Sichuan, P.R. China (e-mail:hexinuser@163.com).}
\thanks{Rong Hu is with Department of Applied Mathematics, Chengdu University of Information Technology, Chengdu, Sichuan, P.R. China (e-mail:ronghumath@aliyun.com).}
\thanks{Ya-Ping Fang is with Department of Mathematics, Sichuan University, Chengdu, Sichuan, P.R. China (e-mail:ypfang@scu.edu.cn. The corresponding author).}
}

\maketitle

\begin{abstract}
Second-order dynamical systems are important tools for solving optimization problems, and most of  existing works in this field have focused on unconstrained optimization problems. In this paper, we propose an inertial primal-dual dynamical system with constant viscous damping and time scaling for the linear equality constrained convex optimization problem, which consists of a second-order ODE for the primal variable and a first-order ODE for the dual variable. When the scaling satisfies certain conditions, we prove its  convergence property without assuming strong convexity. Even the convergence rate can become exponential when the scaling grows exponentially.   We also show that the obtained  convergence property of the dynamical system is preserved under a small perturbation.
\end{abstract}

\begin{IEEEkeywords}
Linear equality constrained convex optimization problem, inertial primal-dual dynamical system, time scaling, convergence rate.
\end{IEEEkeywords}

\section{Introduction}
\IEEEPARstart{A}{}  basic problem arising in many applications such as compressed sensing, statistical estimation, machine learning, the global consensus problem, and image processing, is the linear equality constrained convex optimization problem
	\begin{equation}\label{question}
				\min_{x}  \quad f(x), \quad s.t.  \  Ax = b,
	\end{equation}
where $A\in\mathbb{R}^{m\times n}$, $b\in\mathbb{R}^{m}$, and $f: \mathbb{R}^{n}\to\mathbb{R}\cup\{+\infty\}$ is a  proper, convex and continuously  differentiable function.  See e.g. \cite{BeckA,LinZC,Boyd2010,Chambolle2016,Candes2008,Cao2012,Zargham2013,Liu2017} for some examples. 

Dynamical system approaches have been shown to be efficient tools for solving  optimization problems, and it can  provide more insights into the existing numerical methods. The gradient flow system
$$\dot x(t)=-\nabla f(x(t))$$
can be understood as the continuous limit of  the proximal point algorithm and the gradient method  for  the unconstrained convex  optimization problem
\begin{equation}\label{min_fun}
   \min_x f(x),
\end{equation}
where $f(x)$ is a proper, convex and differentiable function. More precisely, the implicit  and  explicit  schemes  of the gradient flow system lead to the proximal point algorithm and the gradient method respectively.
It was shown by Alvarez \cite{Alvarez2000}  that an implicit discretization scheme of the heavy ball with friction system given by Polyak \cite{Polyak1964}
\begin{equation*}\label{dy:hball}
({HBF})\qquad\ddot{x}(t)+\gamma\dot{x}(t)+\nabla f(x(t))=0,
\end{equation*}
where $\gamma>0$ is a constant damping, can lead to an inertial proximal-like  algorithm for  the problem \eqref{min_fun}.  Su et al. \cite{Su2014} showed that the inertial  system
\begin{equation*}
 ({AVD}_{\alpha})\qquad	\ddot{x}(t)+\frac{\alpha}{t}\dot{x}(t)+\nabla f(x(t))=0
\end{equation*}
with $\alpha=3$ can be understood  as the continuous limit of  the Nesterov's accelerated gradient algorithm \cite{Nesterov1983} and  its widely used successors like the FISTA algorithm of Beck and Teboulle \cite{Beck2009} for  the problem \eqref{min_fun}.  Attouch et al. \cite{AttouchCPR2018} generalized the results of  \cite{Su2014} by considering an additional perturbation.
Without assuming strong convexity of $f$,  Balhag el al. \cite{Balhag2020} derived the exponential convergence of the inertial gradient system with  time scaling $\beta(t)$
\begin{equation}\label{dy_HBFT}
	 \ddot{x}(t)+\gamma\dot{x}(t)+\beta(t)\nabla f(x(t))=0
\end{equation} under the assumption $\beta(t)=e^{\beta t}$ with $\beta\leq \gamma$.   Attouch et al. \cite{AttouchCRF2019}  considered  the following second-order dynamical system
\begin{equation*}
 ({AVD}_{\alpha,\beta})\qquad	\ddot{x}(t)+\frac{\alpha}{t}\dot{x}(t)+\beta(t)\nabla f(x(t))=0
\end{equation*}
for the problem \eqref{min_fun}, and  proved the $\mathcal{O}(1/t^2\beta(t))$ convergence rate under merely convexity assumption of $f$.   The  convergence properties of $({AVD}_{\alpha,\beta})$  were also discussed by Wibisono et al. \cite{WibisonoWJ2016} and Wilson et al. \cite{Wilson2016} from a variational perspective. For more results on dynamical system approaches for unconstrained optimization problems, we refer the reader to \cite{Shi2018,SunY2019,Bolte2003,Bot2016,LuoMP}.

As mentioned earlier, there are many works on inertial dynamical system approaches for unconstrained optimization problems. However, the study on dynamical system methods for constrained optimization problems is still in its infancy, and most of existing works on  dynamical system approaches for constrained optimization problems focused on first-order dynamical systems, (see e.g.\cite{Cherukuri2016,Feijer2010,Zeng2016,Garg2020,Cherukuri2017,Qu2018,Luo2021}). It's worth noting that the  convergence rate analysis of first-order primal-dual dynamical systems always require some additional assumptions, such as strongly convexity \cite{DingJ2019,Tang2020,Wang2021}, metric subregularity \cite{Liang2019}, and PL condition \cite{Hassan2021}. Recently,  some second-order inertial primal--dual dynamical systems have been proposed for linearly constrained optimization problems. We mention some related works as follows.   Zeng et al. \cite{Zeng2019}  proposed a second-order dynamical system based on primal-dual framework and proved  $\mathcal{L}(x(t),\lambda^*)-\mathcal{L}(x^*,\lambda^*)= \mathcal{O}(1/t^2)$, where
$\mathcal{L}(x,\lambda)$ is the Lagrangian function of the problem \eqref{question} and  $(x^*,\lambda^*)$ is a saddle point of  $\mathcal{L}$ in the sense that
\begin{equation}\label{eq_Larg}
	\mathcal{L}(x^*,\lambda)\leq  \mathcal{L}(x^*,\lambda^*)\leq  \mathcal{L}(x,\lambda^*), \qquad \forall (x,\lambda)\in \mathbb{R}^{n}\times\mathbb{R}^m.
\end{equation}
He et al. \cite{HeHF2020} and Attouch et al. \cite{AttouchFC2021} further considered  second-order  primal-dual dynamical systems involving time-dependent positive damping terms for the problem \eqref{question}  with a separable structure, and obtained  some results similar to the ones in  \cite{Zeng2019}. It is worth mentioning that the second-order dynamical systems considered in \cite{Zeng2019,HeHF2020,AttouchFC2021} involve inertial terms  both for the primal variable and the dual variable. Fazlyab et al. \cite{Fazlyab2017}  developed an Euler-Lagrange equation for \eqref{question} and proved that it achieves an exponential convergence under the assumption that $f$ is strongly convex and twice continuously differentiable.

In this paper, we propose a ``second-order primal'' + ``first-order dual'' dynamical system  with constant viscous damping and time scaling,  linked with the Polyak's heavy ball method \cite{Polyak1964}, for the problem \eqref{question}. Under  scaling conditions, we prove the  convergence properties of the proposed dynamical system using  the Lyapunov analysis approaches. Our main contributions are  summarized as follows:
\begin{itemize}
	\item [(a).] We propose a new inertial primal-dual dynamical system with constant viscous damping and time scaling for the problem \eqref{question}.  Compared to the existing inertial primal-dual dynamical systems,  the novelty of  the proposed dynamical system lies in its structure:  a primal-dual dynamical system with a second-order ODE for the primal variable and a first-order ODE for the dual variable.  To the best of our knowledge,  this is the first time to consider  ``second-order primal'' $+$ ``first-order dual'' dynamical systems  for the problem \eqref{question}.
	\item [(b).]  By constructing energy functions, we show that the inertial primal-dual dynamical system achieves an ergodic $\mathcal{O}(1/t)$ convergence rate of the objective function when the scaling coefficient $\beta(t)\equiv\beta>0$, and a nonergodic $\mathcal{O}(1/\beta(t))$ convergence rate of the Lagrangian function when $\lim_{t\to+\infty}\beta(t)=+\infty$.  We show that the convergence rate can become exponential when the scaling grows exponentially.   The convergence properties turn out to be stable under a perturbation $\epsilon(t)$ with $\int^{+\infty}_{t_0}\|\epsilon(t)\|dt<+\infty$. 
\end{itemize}

The rest of this paper is organized as follows: In Section II, we propose  an inertial primal-dual dynamical system with constant viscous damping  and time scaling for the problem \eqref{question}, and investigate the  convergence properties.  In Section III, we consider a perturbed version of the proposed dynamical system, and show that the  convergence properties in Section II still hold with a small perturbation. Finally, we give  concluding remarks in Section IV.

\section{``Second-order primal'' + ``first-order Dual'' dynamical system}

In this section, we  propose  an inertial primal-dual dynamical system with  constant viscous damping and time scaling for the problem \eqref{question}  in terms of the  augmented Lagrangian function, and discuss its   convergence properties. Recall that the  augmented Lagrangian function $\mathcal{L}^{\sigma}(x,\lambda)$ of the  problem \eqref{question} is defined by
\begin{equation*}\label{eq_AugL}
	\mathcal{L}^{\sigma}(x,\lambda) =  \mathcal{L}(x,\lambda)+\frac{\sigma}{2}\|Ax-b\|^2,
\end{equation*}
where
\[  \mathcal{L}(x,\lambda) = f(x)+\langle \lambda,Ax-b\rangle\]
is the Lagrangian function of the problem \eqref{question} and $\sigma > 0$ is the penalty parameter. It is well known that $x^*$ is a solution of  the problem \eqref{question} if and only if there exists $\lambda^*\in \mathbb{R}^m$ such that $(x^*, \lambda^*)$ is a saddle point of  $\mathcal{L}$. It is also known that $(x^*, \lambda^*)$ is a saddle point of  $\mathcal{L}$ if and only if it is a KKT  point of  the  problem \eqref{question} in the sense that
\begin{equation}\label{eq_Saddle_point}
	\begin{cases}
		-A^T\lambda^* =  \nabla f(x^*),\\
		Ax^* -b =0,
	\end{cases}
\end{equation}
where $\nabla f$ is the gradient of $f$.
 Unless  otherwise stated, in this paper  we always assume that the saddle point set $\Omega$ of the Lagrangian function $\mathcal{L}$ is nonempty.   For a fixed $t_0\geq 0$, in terms of the augmented Lagrangian function $\mathcal{L}^{\sigma}$, we propose the following  primal-dual dynamical system,  linked with the Polyak's heavy ball method \cite{Polyak1964}:
\begin{equation*}
	\begin{cases}
		\ddot{x}(t)+\gamma\dot{x}(t)& = -\beta(t)\nabla_x \mathcal{L}^{\sigma}(x(t),\lambda(t)),\\
		 \dot{\lambda}(t) &=\beta(t)\nabla_{\lambda}\mathcal{L}^{\sigma}(x(t)+\delta\dot{x}(t),\lambda(t)),
	\end{cases}
\end{equation*}
where $\gamma, \delta>0$  are two  constant damping  coefficients and $\beta:[t_0,+\infty)\to (0,+\infty)$ is a time scaling coefficient that plays a crucial role in obtaining the  convergence properties. By trivial calculations, we can rewrite it as
\begin{equation}\label{dy_unpertu}
	\begin{cases}
		\ddot{x}(t)+\gamma\dot{x}(t)& = -\beta(t)(\nabla f(x(t))+A^T\lambda(t)\\
		&\quad+\sigma A^T(Ax(t)-b)),\\
		\dot{\lambda}(t)& = \beta(t)(A(x(t)+\delta\dot{x}(t))-b).
	\end{cases}
\end{equation}
It is worth mentioning that the dynamical system \eqref{dy_unpertu} involves the inertial term only for the primal variable, which is very different from the existing primal-dual dynamical systems with inertial terms  both for the primal variable and the dual variable (see \cite{Zeng2019,HeHF2020,AttouchFC2021}). The motivation on the mixed ``second-order''+``first-order'' dynamical system originates from the following observations:  For the acceleration  properties,  the inertial terms are introduced to continuous-time dynamical systems (discrete algorithms) for solving the underlying problems. Also, we notice that the computational cost of a primal-dual algorithm  comes mostly from the subproblem in the primal variable and that  from the point of view of  numerical computations, a first-order ODE is simpler and easier to solve than a second-order ODE in general. A natural problem arises: Whether can we obtain the convergence rates from the literature if we construct the inertial term only for the primal variable? This motivates us to consider the primal-dual dynamical system consisting of a second-order ODE for the primal variable and a first-order ODE for the dual variable for the problem \eqref{question}. In this paper  we will show that  the ``second-order primal''+``first-order dual''  dynamical system \eqref{dy_unpertu} as well as its perturbed version can enjoy  same convergence rates as the ``second-order''+``second-order''  dynamical systems considered in  \cite{Zeng2019,HeHF2020,AttouchFC2021}.

The  existence and uniqueness of local solutions of  \eqref{dy_unpertu} follows directly from  the Picard-Lindelof Theorem (see \cite[Theorem 2.2]{Teschl2012}).

\begin{proposition}\label{local_exist}
	Let $\nabla f$ be locally Lipschitz continuous and  $\beta:[t_0,+\infty)\to (0,+\infty)$ be a continuous function. Then for any $(x_0,\lambda_0,u_0)$, there exists a unique solution $(x(t),\lambda(t))$ with $x(t)\in\mathcal{C}^2([t_0,T),\mathbb{R}^{n})$,  $\lambda(t)\in\mathcal{C}^1([t_0,T),\mathbb{R}^{m})$ of the dynamical system \eqref{dy_unpertu} satisfying $(x(t_0),\lambda(t_0),\dot{x}(t_0))=(x_0,\lambda_0,u_0)$ on a maximal interval $[t_0,T)\subseteq[t_0,+\infty)$.
\end{proposition}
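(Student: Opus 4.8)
The plan is to recast the second-order system \eqref{dy_unpertu} as an equivalent first-order system and then to invoke the Picard-Lindelof theorem directly, as the statement already anticipates. First I would introduce the auxiliary velocity variable $u(t)=\dot{x}(t)$, which reduces the single second-order ODE for the primal variable to a pair of first-order equations. Collecting everything, I obtain the equivalent first-order system
\begin{equation*}
\begin{cases}
\dot{x}(t) = u(t),\\
\dot{u}(t) = -\gamma u(t)-\beta(t)\bigl(\nabla f(x(t))+A^T\lambda(t)+\sigma A^T(Ax(t)-b)\bigr),\\
\dot{\lambda}(t) = \beta(t)\bigl(A(x(t)+\delta u(t))-b\bigr),
\end{cases}
\end{equation*}
with state variable $z(t)=(x(t),u(t),\lambda(t))\in\mathbb{R}^n\times\mathbb{R}^n\times\mathbb{R}^m$ and prescribed initial condition $z(t_0)=(x_0,u_0,\lambda_0)$.

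Next I would define the right-hand side as a vector field $F(t,z)$ and verify the two hypotheses of the Picard-Lindelof theorem: that $F$ is continuous in $t$ and locally Lipschitz in $z$, uniformly on compact time intervals. Continuity in $t$ follows at once from the continuity of $\beta$. For the local Lipschitz property, I would treat the three components separately. The first component $u$ is linear in $z$, hence globally Lipschitz. The affine maps $x\mapsto A^T\lambda+\sigma A^T(Ax-b)$ and $(x,u)\mapsto A(x+\delta u)-b$ are globally Lipschitz in $z$, with constants depending only on $A$, $\sigma$, $\delta$. The only nonlinear term is $\nabla f(x)$, which is locally Lipschitz by assumption. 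Multiplying these state-dependent factors by the scaling factor $\beta(t)$ preserves the local Lipschitz property on any compact time interval $[t_0,\tau]$, because $\beta$ is continuous and therefore bounded on $[t_0,\tau]$; the resulting Lipschitz constant is simply the product of that bound with the Lipschitz constants of the state-dependent factors.

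With both hypotheses verified, the Picard-Lindelof theorem in the form of \cite[Theorem 2.2]{Teschl2012} yields a unique maximal solution $z(t)$ of the first-order system on some interval $[t_0,T)\subseteq[t_0,+\infty)$. Unwinding the substitution $u=\dot{x}$ shows that $x(t)$ is twice continuously differentiable, so $x\in\mathcal{C}^2([t_0,T),\mathbb{R}^n)$, while $\lambda(t)$ inherits $\mathcal{C}^1$ regularity from its defining equation, giving exactly the claimed regularity. I do not anticipate any genuine obstacle here; the only point demanding a little care is confirming that the time-dependence entering through $\beta(t)$ does not spoil the local Lipschitz estimate, and this is handled by restricting to compact time intervals where $\beta$ is bounded.
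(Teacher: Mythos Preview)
Your proposal is correct and matches the paper's approach: the paper simply asserts that the result ``follows directly from the Picard--Lindel\"of Theorem (see \cite[Theorem~2.2]{Teschl2012})'' without writing out any details, and your reduction to a first-order system with verification of continuity in $t$ and local Lipschitz continuity in the state is exactly the standard justification of that claim.
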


Next, we start to discuss the asymptotic properties of \eqref{dy_unpertu}.

\begin{theorem}\label{th_dy_unpertu}
	 Assume that  $\nabla f$ is locally Lipschitz continuous, $\beta:[t_0,+\infty)\to(0,+\infty)$ is a continuous differentiable function, and the following  scaling condition holds:
\begin{equation}\label{eq_th1_beta_t}
	\dot{\beta}(t)\leq \frac{1}{\delta}\beta(t), \qquad \frac{1}{\delta}< \gamma.
\end{equation}
Then for any $(x_0,\lambda_0,u_0)$, there exists a unique global solution $(x(t),\lambda(t))$ with $x(t)\in\mathcal{C}^2([t_0,+\infty),\mathbb{R}^{n})$, $\lambda(t)\in\mathcal{C}^1([t_0,+\infty),\mathbb{R}^{m})$ of the dynamical system \eqref{dy_unpertu} satisfying $(x(t_0),\lambda(t_0),\dot{x}(t_0))=(x_0,\lambda_0,u_0)$. Moreover, $(x(t),\lambda(t))$ is bounded on $[t_0,+\infty)$, and for any $(x^*,\lambda^*)\in\Omega$  the following conclusions hold:
\begin{itemize}
	\item [(i)]$\int^{+\infty}_{t_0} (\frac{1}{\delta}\beta(t)-\dot{\beta}(t))(\mathcal{L}^{\sigma}(x(t),\lambda^*)-\mathcal{L}^{\sigma}(x^*,\lambda(t))) dt <+\infty$.
	\item [(ii)] $\int^{+\infty}_{t_0}\beta(t)\|Ax(t)-b\|^2 dt <+\infty$,  $\int^{+\infty}_{t_0}\|\dot{x}(t)\|^2dt <+\infty$.
	\item [(iii)] If  $ \lim_{t\to+\infty}\beta(t) = +\infty$, then
	\[ \mathcal{L}(x(t),\lambda^*)-\mathcal{L}(x^*,\lambda(t)) =\mathcal{O}\left(\frac{1}{\beta(t)}\right),\]
	and
	\[ \|Ax(t)-b\| =\mathcal{O}\left(\frac{1}{\sqrt{\beta(t)}}\right).\]
\end{itemize}
\end{theorem}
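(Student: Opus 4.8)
The plan is a Lyapunov analysis organized around the saddle-point gap $V(t):=\mathcal{L}^{\sigma}(x(t),\lambda^*)-\mathcal{L}^{\sigma}(x^*,\lambda(t))$ and the energy function
\[\mathcal{E}(t):=\beta(t)V(t)+\tfrac12\|\dot x(t)\|^2+\tfrac1{2\delta}\|\lambda(t)-\lambda^*\|^2+\tfrac1\delta\langle\dot x(t),x(t)-x^*\rangle+\tfrac{\gamma}{2\delta}\|x(t)-x^*\|^2.\]
Before differentiating, I would record the elementary facts that, since $Ax^*=b$ and $(x^*,\lambda^*)$ is a KKT point, $\mathcal{L}^{\sigma}(x^*,\lambda)=f(x^*)$ for all $\lambda$ and $x^*$ globally minimizes the convex map $\mathcal{L}^{\sigma}(\cdot,\lambda^*)$, so that $V\ge0$; the augmented term moreover gives the refined bound $V\ge\frac\sigma2\|Ax-b\|^2$ together with the decomposition $V=(\mathcal{L}(x,\lambda^*)-\mathcal{L}(x^*,\lambda))+\frac\sigma2\|Ax-b\|^2$, where $\mathcal{L}(x,\lambda^*)-\mathcal{L}(x^*,\lambda)\ge0$ by the saddle-point inequalities. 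Finally I would note that the quadratic part of $\mathcal{E}$ in $(\dot x,x-x^*)$ is positive definite exactly because $\frac1\delta<\gamma$, whence $\mathcal{E}\ge\beta V\ge0$.

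Next I would differentiate $\mathcal{E}$ along \eqref{dy_unpertu}. Since $\nabla_\lambda\mathcal{L}^{\sigma}(x^*,\lambda)=Ax^*-b=0$, one gets $\dot V=\langle\nabla_x\mathcal{L}^{\sigma}(x,\lambda^*),\dot x\rangle$; substituting $\nabla_x\mathcal{L}^{\sigma}(x,\lambda)=-\frac1\beta(\ddot x+\gamma\dot x)$ and $\nabla_x\mathcal{L}^{\sigma}(x,\lambda^*)=\nabla_x\mathcal{L}^{\sigma}(x,\lambda)+A^T(\lambda^*-\lambda)$ yields $\frac{d}{dt}(\beta V)=\dot\beta V-\langle\ddot x,\dot x\rangle-\gamma\|\dot x\|^2+\beta\langle\lambda^*-\lambda,A\dot x\rangle$. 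The design point is that differentiating $\frac1{2\delta}\|\lambda-\lambda^*\|^2$ and inserting $\dot\lambda=\beta(Ax-b)+\delta\beta A\dot x$ produces a term $+\beta\langle\lambda-\lambda^*,A\dot x\rangle$ that cancels the coupling above---this is precisely why the damping $\delta\dot x$ is placed inside the dual ODE. After the $\|\dot x\|^2$ cancellation with $\frac12\|\dot x\|^2$, writing $\mathcal{E}_0$ for the first three terms of $\mathcal{E}$, I reach $\frac{d}{dt}\mathcal{E}_0=\dot\beta V-\gamma\|\dot x\|^2+\frac\beta\delta\langle\lambda-\lambda^*,Ax-b\rangle$.

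The core step is controlling the bilinear term. Using $\langle\lambda-\lambda^*,Ax-b\rangle=\mathcal{L}^{\sigma}(x,\lambda)-\mathcal{L}^{\sigma}(x,\lambda^*)=(\mathcal{L}^{\sigma}(x,\lambda)-\mathcal{L}^{\sigma}(x^*,\lambda))-V$ converts part of it into $-\frac\beta\delta V$, which merges with $\dot\beta V$ into $-(\frac1\delta\beta-\dot\beta)V$. For the leftover $\frac\beta\delta(\mathcal{L}^{\sigma}(x,\lambda)-\mathcal{L}^{\sigma}(x^*,\lambda))$ I would invoke the sharpened convexity bound afforded by the augmented term, $\mathcal{L}^{\sigma}(x,\lambda)-\mathcal{L}^{\sigma}(x^*,\lambda)\le\langle\nabla_x\mathcal{L}^{\sigma}(x,\lambda),x-x^*\rangle-\frac\sigma2\|Ax-b\|^2$, and again replace $\nabla_x\mathcal{L}^{\sigma}(x,\lambda)=-\frac1\beta(\ddot x+\gamma\dot x)$. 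Since $-\frac1\delta\langle\ddot x+\gamma\dot x,x-x^*\rangle$ is the time derivative of the two cross terms of $\mathcal{E}$ up to $+\frac1\delta\|\dot x\|^2$, transferring them to the left gives the clean inequality
\[\dot{\mathcal{E}}(t)\le-\Big(\tfrac1\delta\beta(t)-\dot\beta(t)\Big)V(t)-\Big(\gamma-\tfrac1\delta\Big)\|\dot x(t)\|^2-\tfrac{\sigma}{2\delta}\beta(t)\|Ax(t)-b\|^2\le0,\]
with both coefficients nonnegative by the scaling condition \eqref{eq_th1_beta_t}.

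From this differential inequality everything follows: $\mathcal{E}$ is nonincreasing and $\ge0$, hence bounded and convergent; the positive-definiteness bound then gives boundedness of $\|x-x^*\|$, $\|\dot x\|$ and $\|\lambda-\lambda^*\|$, so of $(x,\lambda)$, and a standard non-blow-up argument (bounding $\ddot x,\dot\lambda$ on finite intervals via the ODE and continuity of $\beta$) promotes the local solution of Proposition \ref{local_exist} to a global one; integrating over $[t_0,+\infty)$ yields the three finite integrals of (i)--(ii). For (iii), $\beta(t)\to+\infty$ together with $V\le\mathcal{E}(t_0)/\beta(t)$, combined with $0\le\mathcal{L}(x,\lambda^*)-\mathcal{L}(x^*,\lambda)\le V$ and $\frac\sigma2\|Ax-b\|^2\le V$, delivers the two stated rates. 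The main obstacle is discovering the Lyapunov function itself---the coefficient $\frac1\delta$ of the dual energy forced by the $\delta\dot x$ coupling so that the $A\dot x$ terms cancel, the cross terms $\frac1\delta\langle\dot x,x-x^*\rangle+\frac{\gamma}{2\delta}\|x-x^*\|^2$, and the realization that one must use the \emph{augmented} convexity inequality to manufacture the $-\frac{\sigma}{2\delta}\beta\|Ax-b\|^2$ dissipation underlying the first integral of (ii), with the overall positivity of $\mathcal{E}$ resting on $\frac1\delta<\gamma$.
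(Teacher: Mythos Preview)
Your proposal is correct and takes essentially the same approach as the paper: your energy function is exactly the paper's $\mathcal{E}(t)=\mathcal{E}_0(t)+\mathcal{E}_1(t)$ written in expanded rather than completed-square form (since $\tfrac12\|\tfrac1\delta(x-x^*)+\dot x\|^2+\tfrac{\delta\gamma-1}{2\delta^2}\|x-x^*\|^2=\tfrac12\|\dot x\|^2+\tfrac1\delta\langle\dot x,x-x^*\rangle+\tfrac{\gamma}{2\delta}\|x-x^*\|^2$), and your differential inequality coincides with the paper's key estimate \eqref{eq_th1_6}. The only cosmetic difference is bookkeeping: the paper substitutes the ODE directly into $\dot{\mathcal{E}}_1$ and then applies convexity to $\langle x-x^*,\nabla f(x)+A^T\lambda^*+\sigma A^T(Ax-b)\rangle$, whereas you route through the identities $\nabla_x\mathcal{L}^\sigma(x,\lambda)=-\tfrac1\beta(\ddot x+\gamma\dot x)$ and $\langle\lambda-\lambda^*,Ax-b\rangle=(\mathcal{L}^\sigma(x,\lambda)-\mathcal{L}^\sigma(x^*,\lambda))-V$ before invoking the sharpened convexity bound; both computations land on the same inequality and the remaining deductions (global existence, boundedness, (i)--(iii)) are identical.
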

\begin{proof}
By Proposition \ref{local_exist}, there exists a unique local solution $(x(t),\lambda(t))$ of \eqref{dy_unpertu} defined on a maximal interval $[t_0 , T)$ with $T\leq +\infty$ for any initial value.

We first show  $T = +\infty$.  Fix $(x^*,\lambda^*)\in\Omega$. Consider the energy function $\mathcal{E}:[t_0,T)\to[0,+\infty)$ defined by
\begin{equation}\label{eq_th1_1}
	\mathcal{E}(t) = \mathcal{E}_0(t)+\mathcal{E}_1(t),
	\end{equation}
where
\begin{equation*}\label{eq_th1_2}
	\begin{cases}
		\mathcal{E}_0(t) =\beta(t)(\mathcal{L}^{\sigma}(x(t),\lambda^*)-\mathcal{L}^{\sigma}(x^*,\lambda^*)	),\\
		\mathcal{E}_1(t) = \frac{1}{2}\|\frac{1}{\delta}(x(t)-x^*)+\dot{x}(t)\|^2+\frac{\delta\gamma-1}{2\delta^2}\|x(t)-x^*\|^2\\
	\qquad\qquad+\frac{1}{2\delta}\|\lambda(t)-\lambda^*\|^2.
	\end{cases}
\end{equation*}
Differentiate  $\mathcal{E}_0(t)$ to get
\begin{eqnarray}\label{eq_th1_3}
	&&\dot{\mathcal{E}}_0(t) =\dot{\beta}(t)(\mathcal{L}^{\sigma}(x(t),\lambda^*)-\mathcal{L}^{\sigma}(x^*,\lambda^*))\\
	 &&\qquad+ \beta(t)\langle \nabla f(x(t))+A^T\lambda^*+\sigma A^T(Ax(t)-b),\dot{x}(t)\rangle.\nonumber
\end{eqnarray}
Since $(x^*,\lambda^*)\in\Omega$ and $Ax^*=b$, from \eqref{dy_unpertu} we have
\begin{eqnarray}\label{eq_th1_4}
	&& \dot{\mathcal{E}}_1(t) = \langle
			\frac{1}{\delta}(x(t)-x^*)+\dot{x}(t),\frac{1}{\delta}\dot{x}(t)+\ddot{x}(t)\rangle\nonumber\\
		&&\quad	+ \frac{\delta\gamma-1}{\delta^2}\langle x(t)-x^*,\dot{x}(t)\rangle + \frac{1}{\delta}\langle \lambda(t)-\lambda^*,\dot{\lambda}(t)\rangle\nonumber  \\
		&& =\langle
			\frac{1}{\delta}(x(t)-x^*)+\dot{x}(t),(\frac{1}{\delta}-\gamma)\dot{x}(t)\rangle\nonumber \\
		&&\quad -\beta(t)\langle
			\frac{1}{\delta}(x(t)-x^*)+\dot{x}(t),\nabla f(x(t))+A^T\lambda(t)\rangle\nonumber\\
		&&\quad -\sigma\beta(t)\langle
			\frac{1}{\delta}(x(t)-x^*)+\dot{x}(t), A^T(Ax(t)-b)\rangle\nonumber \\			
		&&\quad +\frac{\delta\gamma-1}{\delta^2}\langle x(t)-x^*,\dot{x}(t)\rangle\\
		&&\quad +\frac{\beta(t)}{\delta}\langle \lambda(t)-\lambda^*,A(x(t)-x^*)+\delta A\dot{x}(t)\rangle\nonumber\\
		&&= \frac{1-\delta\gamma}{\delta}\|\dot{x}(t)\|^2\nonumber\\
		&&\ - \frac{\beta(t)}{\delta}\langle x(t)-x^*, \nabla f(x(t))+ A^T\lambda^*+\sigma A^T(Ax(t)-b)\rangle\nonumber \\
		&&\  - \beta(t)\langle \dot{x}(t), \nabla f(x(t))+A^T\lambda^*+\sigma A^T(Ax(t)-b)\rangle.\nonumber
\end{eqnarray}
Since $f$ is a convex function and $Ax^*=b$,  
\begin{small}
\begin{eqnarray*}
	&&\langle x(t)-x^*, \nabla f(x(t))+ A^T\lambda^*+\sigma A^T(Ax(t)-b)\rangle\\
	&&  = \langle x(t)-x^*, \nabla f(x(t))\rangle + \langle Ax(t)-b,\lambda^*\rangle + \sigma\|Ax(t)-b\|^2\\
	&&\geq f(x(t))-f(x^*)+ \langle Ax(t)-b,\lambda^*\rangle + \sigma\|Ax(t)-b\|^2\\
	&& = \mathcal{L}^{\sigma}(x(t),\lambda^*)-\mathcal{L}^{\sigma}(x^*,\lambda^*)+ \frac{\sigma}{2}\|Ax(t)-b\|^2.
\end{eqnarray*}
\end{small}
This together with \eqref{eq_th1_3} and \eqref{eq_th1_4} implies
\begin{eqnarray}\label{eq_th1_6}
	\dot{\mathcal{E}}(t) &=& \dot{\mathcal{E}}_0(t)+\dot{\mathcal{E}}_1(t)\nonumber\\
	&\leq& (\dot{\beta}(t)-\frac{1}{\delta}\beta(t))(\mathcal{L}^{\sigma}(x(t),\lambda^*)-\mathcal{L}^{\sigma}(x^*,\lambda^*))\nonumber\\
	&&-\frac{\sigma\beta(t)}{2\delta}\|Ax(t)-b\|^2+\frac{1-\delta\gamma}{\delta}\|\dot{x}(t)\|^2.
\end{eqnarray}
By \eqref{eq_Larg}, $\mathcal{L}^{\sigma}(x(t),\lambda^*)-\mathcal{L}^{\sigma}(x^*,\lambda^*)\geq 0$. From $ \eqref{eq_th1_beta_t}$, we have $\dot{\beta}(t)-\frac{1}{\delta}\beta(t))\leq 0$ and $\gamma\delta>1$.  Then ${\mathcal{E}}(t)\geq 0$,  and from \eqref{eq_th1_6} we get
\[\dot{\mathcal{E}}(t)\leq 0, \quad \forall t\in[t_0,T).\]
 As a consequence,   $\mathcal{E}(t)$  is nonincreasing on $[t_0, T)$,  and so
\begin{equation}\label{eq_th1_7}
	0\le \mathcal{E}(t)\leq  \mathcal{E}(t_0),\quad \forall t\in[ t_0,T).
\end{equation}
This together with \eqref{eq_th1_1} implies
\begin{equation*}\label{eq_th1_8}
	\frac{1}{2}\|\frac{1}{\delta}(x(t)-x^*)+\dot{x}(t)\|^2+\frac{\delta\gamma-1}{2\delta^2}\|x(t)-x^*\|^2\leq  \mathcal{E}(t_0)
\end{equation*}
for any $t\in[ t_0,T)$. Since $\delta\gamma-1>0$, we get
\begin{equation*}\label{eq_th1_9}
		\sup_{t\in[ t_0, T)} \|x(t)-x^*\|\leq  \sqrt{\frac{2\delta^2\mathcal{E}(t_0)}{\delta\gamma-1}} ,\quad \forall t\in[ t_0,T).
\end{equation*}
Using the triangle inequality, we get
 \begin{small}
 \begin{eqnarray*}\label{eq_th1_10}
 &&	\sup_{t\in[ t_0, T)}\|\dot{x}(t)\|\\
 	&&\leq\sup_{t\in[ t_0,T)}\|\frac{1}{\delta}(x(t)-x^*)+\dot{x}(t)\|+ \sup_{t\in[ t_0,T)}\frac{1}{\delta}\|x(t)-x^*\|\\	
 	&&\leq \sqrt{2\mathcal{E}(t_0)} + \sup_{t\in[ t_0,T)}\frac{1}{\delta}\|x(t)-x^*\|\\
 	&&< +\infty.
 \end{eqnarray*}
\end{small}
So $(x(t),\dot{x}(t))$ is bounded on $[t_0,T)$. By \eqref{eq_th1_1}, \eqref{eq_th1_7} and similar arguments, we have
\[\frac{1}{2\delta}\|\lambda(t)-\lambda^*\|^2\leq \mathcal{E}_1(t)\leq\mathcal{E}(t)\leq  \mathcal{E}(t_0),\quad \forall t\in[ t_0,T).  \]
Thus, $\lambda(t)$ is bounded on $[t_0, T)$.

Assume  on the contrary that $T<+\infty$.  Since  $(x(t), \lambda(t),\dot{x}(t))$ is bounded on $[t_0,T)$, by \eqref{dy_unpertu} and the assumptions we know that $(\ddot{x}(t),\dot{\lambda}(t))$ is bounded on $[t_0,T)$. It ensues that $(x(t), \lambda(t),\dot{x}(t))$  has a limit at $t = T$, and therefore can be continued. This arrives at a contradiction. Thus $T=+\infty$. 
By the above arguments, we can get the boundedness of $(x(t),\lambda(t))$  on $[t_0,+\infty)$.

Next, we show the results $(i)-(iii)$.
By integrating \eqref{eq_th1_6} on $[t_0,+\infty)$ and using \eqref{eq_th1_7}, we obtain
\begin{small}
\begin{eqnarray*}
	&&-{\mathcal{E}}(t_0)\leq\int^{\infty}_{t_0}(\dot{\beta}(t)-\frac{1}{\delta}\beta(t))(\mathcal{L}^{\sigma}(x(t),\lambda^*)-\mathcal{L}^{\sigma}(x^*,\lambda^*))dt\nonumber \\
	&&\quad- \int^{\infty}_{t_0}\frac{\sigma\beta(t)}{2\delta}\|Ax(t)-b\|^2dt+ \int^{\infty}_{t_0}\frac{1-\delta\gamma}{\delta}\|\dot{x}(t)\|^2dt\\
	&& \quad\leq 0.\nonumber
\end{eqnarray*}
\end{small}
It yields
\begin{eqnarray}\label{inter_abc}
	 &&\int^{\infty}_{t_0}(\frac{1}{\delta}\beta(t)-\dot{\beta}(t))(\mathcal{L}^{\sigma}(x(t),\lambda^*)-\mathcal{L}^{\sigma}(x^*,\lambda^*))dt\nonumber \\
	&&\quad+ \int^{\infty}_{t_0}\frac{\sigma\beta(t)}{2\delta}\|Ax(t)-b\|^2dt+ \frac{\delta\gamma-1}{\delta}\int^{\infty}_{t_0}\|\dot{x}(t)\|^2dt\nonumber\\
	&& \leq {\mathcal{E}}(t_0)\\
	&& <+\infty.\nonumber
\end{eqnarray}
Since $\frac{1}{\delta}\beta(t)-\dot{\beta}(t)\geq 0$ and $\delta\gamma-1> 0$,  each individual integral of \eqref{inter_abc} is nonnegative. It follows from  \eqref{inter_abc} that
\begin{equation*}\label{eq_th1_13}
	\int^{+\infty}_{t_0}(\frac{1}{\delta}\beta(t)-\dot{\beta}(t))(\mathcal{L}^{\sigma}(x(t),\lambda^*)-\mathcal{L}^{\sigma}(x^*,\lambda^*)) dt <+\infty,
\end{equation*}
\[ \int^{+\infty}_{t_0}\beta(t)\|Ax(t)-b\|^2 dt <+\infty\]
and
\[ \int^{+\infty}_{t_0}\|\dot{x}(t)\|^2 dt <+\infty. \]
If $\lim_{t\to+\infty}\beta(t)=+\infty$, by the definition of $\mathcal{E}(t)$ and \eqref{eq_th1_7} we get
\begin{equation*}\label{fyp282}
	\mathcal{L}^{\sigma}(x(t),\lambda^*)-\mathcal{L}^{\sigma}(x^*,\lambda^*) =\mathcal{O}\left(\frac{1}{\beta(t)}\right).	
\end{equation*}
To complete the proof, we  need to prove  $\mathcal{L}^{\sigma}(x^*,\lambda^*)=\mathcal{L}^{\sigma}(x^*,\lambda(t))$.

Since  $Ax^*=b$, it follows that
\begin{eqnarray*}
	&&\mathcal{L}^{\sigma}(x^*,\lambda(t)) \\
	&&= f(x^*)+\langle \lambda(t), Ax^*-b\rangle +\frac{\sigma}{2}\|Ax^*-b\|^2\\
&&=f(x^*)+\langle \lambda^*, Ax^*-b\rangle +\frac{\sigma}{2}\|Ax^*-b\|^2\\
	&& = \mathcal{L}^{\sigma}(x^*,\lambda^*).
 \end{eqnarray*}
\end{proof}

\begin{remark}
If the scaling condition \eqref{eq_th1_beta_t} is replaced with the   stronger one
$$\dot{\beta}(t)\leq(1-\kappa)\frac{1}{\delta}\beta(t),\qquad  \frac{1}{\delta}< \gamma $$
for some  $\kappa\in(0, 1]$,  then  by  Theorem \ref{th_dy_unpertu} $(i)$ we have
$$\int^{+\infty}_{t_0}\beta(t)(\mathcal{L}^{\sigma}(x(t),\lambda^*)-\mathcal{L}^{\sigma}(x^*,\lambda(t))) dt <+\infty.$$
It is worth mentioning that  Balhag et al. \cite{Balhag2020} discussed the convergence rate analysis of  the dynamical system \eqref{dy_HBFT} by  proving  $f(x(t))-\min f =\mathcal{O}(\frac{1}{\beta(t)})$ in \cite[Theorem 2.1]{Balhag2020} under the condition $\dot{\beta}(t)\leq \gamma\beta(t)$,
and
$$\int^{+\infty}_{t_0}\beta(t)(f(x(t))-\min f) dt <+\infty$$ in \cite[Proposition 2.3]{Balhag2020}
under the condition $\dot{\beta}(t)\leq(1-\kappa)\gamma\beta(t)$ for the unconstrained optimization problem \eqref{min_fun}. In this sense, Theorem \ref{th_dy_unpertu}  extends \cite[Theorem 2.1, Proposition 2.3]{Balhag2020} from the the unconstrained optimization problem \eqref{min_fun} to the linear equality constrained problem \eqref{question}.
\end{remark}

\begin{remark}
	As shown in Theorem \ref{th_dy_unpertu}, the time scaling coefficient $\beta(t)$ plays a crucial role in obtaining  convergence rates. The importance of the scaling  has been recognized in the design of accelerated dynamical systems and algorithms for convex optimization problems, see \cite{WibisonoWJ2016,Wilson2016,AttouchCRF2019}.
\end{remark}

By Theorem \ref{th_dy_unpertu}, we have the following result.
\begin{lemma}\label{cor_hx1}
	Suppose the assumptions of Theorem \ref{th_dy_unpertu} hold. Then
\begin{footnotesize}
\[
		\sup_{t\geq t_0} \left\|\beta(t)(Ax(t)-b)+\int^{t}_{t_0}(\frac{1}{\delta}\beta(s)- \dot{\beta}(s))(Ax(s)-b)ds\right\|
		<+\infty.
\]
\end{footnotesize}
\end{lemma}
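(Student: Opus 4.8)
The plan is to exploit the dual equation of the system \eqref{dy_unpertu} directly, rather than re-running the energy analysis. Writing $r(t):=Ax(t)-b$ so that $\dot r(t)=A\dot x(t)$, the second line of \eqref{dy_unpertu} reads $\dot\lambda(t)=\beta(t)r(t)+\delta\beta(t)A\dot x(t)=\beta(t)r(t)+\delta\beta(t)\dot r(t)$. The key observation is that integrating this identity over $[t_0,t]$ and performing one integration by parts on the term carrying $\dot r$ reproduces, up to fixed boundary terms, \emph{exactly} the quantity appearing inside the supremum.

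Concretely, I would integrate to obtain $\lambda(t)-\lambda(t_0)=\int_{t_0}^t\beta(s)r(s)\,ds+\delta\int_{t_0}^t\beta(s)\dot r(s)\,ds$, and then apply integration by parts to the last integral via $\int_{t_0}^t\beta(s)\dot r(s)\,ds=\beta(t)r(t)-\beta(t_0)r(t_0)-\int_{t_0}^t\dot\beta(s)r(s)\,ds$. Substituting this back, isolating $\delta\beta(t)r(t)$, dividing by $\delta$, and collecting the two integral terms over the common integrand $\tfrac1\delta\beta(s)-\dot\beta(s)$ yields the exact identity
\begin{align*}
&\beta(t)(Ax(t)-b)+\int_{t_0}^t\Big(\tfrac1\delta\beta(s)-\dot\beta(s)\Big)(Ax(s)-b)\,ds\\
&\qquad=\tfrac1\delta\big(\lambda(t)-\lambda(t_0)\big)+\beta(t_0)(Ax(t_0)-b).
\end{align*}

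With this identity the conclusion is immediate: the right-hand side is controlled by $\tfrac1\delta\|\lambda(t)\|$ plus the fixed constants $\tfrac1\delta\|\lambda(t_0)\|$ and $\beta(t_0)\|Ax(t_0)-b\|$, so the triangle inequality gives a uniform-in-$t$ bound as soon as $\lambda(t)$ is known to be bounded on $[t_0,+\infty)$, which is precisely the boundedness asserted in Theorem \ref{th_dy_unpertu}. Taking the supremum over $t\ge t_0$ then finishes the proof.

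I do not anticipate a genuine obstacle: the whole argument is an exact rewriting through integration by parts, with no inequality or limiting step needed beyond the already-established boundedness of $\lambda$. The only place demanding care is the bookkeeping of the signs and of the factor $\delta$ when matching the two integral terms to the prescribed integrand $\tfrac1\delta\beta(s)-\dot\beta(s)$; recovering the correct boundary term $\beta(t_0)(Ax(t_0)-b)$ and the coefficient $\tfrac1\delta$ on $\lambda$ is where a computational slip would most plausibly arise.
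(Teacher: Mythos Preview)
Your proposal is correct and is essentially the same argument as the paper's own proof: integrate the dual equation $\dot\lambda(t)=\beta(t)(A(x(t)+\delta\dot x(t))-b)$ over $[t_0,t]$, apply one integration by parts to the $\beta(s)A\dot x(s)$ term, and then invoke the boundedness of $\lambda(t)$ from Theorem~\ref{th_dy_unpertu}. The resulting identity and the conclusion match the paper exactly, including the boundary term $\beta(t_0)(Ax(t_0)-b)$ and the factor $\tfrac1\delta$ on $\lambda(t)-\lambda(t_0)$.
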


\begin{proof}
By the second equation of \eqref{dy_unpertu} and applying the partial integration, we have
	\begin{eqnarray}\label{hx1}
		&& \lambda(t)-\lambda(t_0) =   \int^t_{t_0}\dot{\lambda}(s)ds\nonumber
\\
		 &&=\int^t_{t_0}  \beta(s)(A(x(s)+\delta \dot{x}(s))-b)ds\\
		 &&=  \int^t_{t_0}\beta(s)(Ax(s)-b)ds+\int^t_{t_0}  \delta\beta(s)d(A{x}(s)-b)\nonumber
\\
		&&= {\delta}\beta(t)(Ax(t)-b)-{\delta}\beta(t_0)(Ax(t_0)-b)\nonumber\\
	&&\quad+\delta\int^{t}_{t_0}(\frac{1}{\delta}\beta(s)-\dot{\beta}(s))(Ax(s)-b)ds,\nonumber
	\end{eqnarray}
where the third equality follows from the fact
$$A\dot{x}(s) = \frac{d(Ax(s)-b)}{ds}.$$ 
By Theorem \ref{th_dy_unpertu}, we know that ${\lambda}(t)$ is  bounded and
\[\sup_{t\geq t_0} \left\|\lambda(t)-\lambda(t_0)+{\delta}\beta(t_0)(Ax(t_0)-b)\right\|<+\infty.\]
 This together with \eqref{hx1} yields the desired result.
\end{proof}

Based on Lemma \ref{cor_hx1}, we can prove the  following $\mathcal{O}(1/t)$ ergodic convergence rate result.
\begin{theorem}\label{re_th1_3}
 Assume that $\nabla f$ is locally Lipschitz continuous, $\beta(t)\equiv \beta>0$, and $\gamma\delta>1$.  Let $(x(t),\lambda(t))$ be a global solution of the dynamical system \eqref{dy_unpertu} and $(x^*,\lambda^*)\in\Omega$. Then
\[|f(\bar{x}(t))-f(x^*)|=\mathcal{O}\left(\frac{1}{t}\right),\qquad \|A\bar{x}(t)-b\|= \mathcal{O}\left(\frac{1}{t}\right),\]
where $\bar{x}(t)=\frac{\int^t_{t_0}x(s)ds}{t-t_0}$.
\end{theorem}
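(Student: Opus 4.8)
The plan is to treat the feasibility measure $\|A\bar{x}(t)-b\|$ and the objective gap $f(\bar{x}(t))-f(x^*)$ separately, building both estimates on the bounds already secured in Theorem \ref{th_dy_unpertu} and Lemma \ref{cor_hx1}, specialized to the constant scaling $\beta(t)\equiv\beta$. Note first that the hypotheses here ($\beta$ constant and $\gamma\delta>1$, i.e. $\frac1\delta<\gamma$) make the scaling condition \eqref{eq_th1_beta_t} hold, so all conclusions of Theorem \ref{th_dy_unpertu} and Lemma \ref{cor_hx1} are available. Throughout I would write generic positive constants, depending only on the data and on $\mathcal{E}(t_0)$, without naming them individually.

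First I would establish the feasibility rate. Since $Ax^*=b$, one has $A\bar{x}(t)-b=\frac{1}{t-t_0}\int_{t_0}^t(Ax(s)-b)\,ds$, so it suffices to show that $\int_{t_0}^t(Ax(s)-b)\,ds$ stays uniformly bounded. This is exactly where Lemma \ref{cor_hx1} enters: with $\dot{\beta}\equiv 0$ it reads $\sup_t\|\beta(Ax(t)-b)+\frac{\beta}{\delta}\int_{t_0}^t(Ax(s)-b)\,ds\|<+\infty$. Dividing by $\beta$ and using that $Ax(t)-b$ is bounded (a consequence of the boundedness of $x(t)$ in Theorem \ref{th_dy_unpertu}), I conclude that $\|\int_{t_0}^t(Ax(s)-b)\,ds\|$ is uniformly bounded, and dividing by $t-t_0$ yields $\|A\bar{x}(t)-b\|=\mathcal{O}(1/t)$.

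Next I would prove the upper bound $f(\bar{x}(t))-f(x^*)\le\mathcal{O}(1/t)$. Convexity of $f$ and Jensen's inequality give $f(\bar{x}(t))\le\frac{1}{t-t_0}\int_{t_0}^t f(x(s))\,ds$, so it suffices to bound $\int_{t_0}^t(f(x(s))-f(x^*))\,ds$ from above by a constant. Using $\mathcal{L}^{\sigma}(x^*,\lambda^*)=f(x^*)$ (since $Ax^*=b$), I would rewrite $f(x(s))-f(x^*)=(\mathcal{L}^{\sigma}(x(s),\lambda^*)-\mathcal{L}^{\sigma}(x^*,\lambda^*))-\frac{\sigma}{2}\|Ax(s)-b\|^2-\langle\lambda^*,Ax(s)-b\rangle$. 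Integrating term by term: the first contributes a finite amount by Theorem \ref{th_dy_unpertu}$(i)$ (with constant $\beta$ and $\mathcal{L}^{\sigma}(x^*,\lambda(s))=\mathcal{L}^{\sigma}(x^*,\lambda^*)$, part $(i)$ says $\int_{t_0}^{+\infty}(\mathcal{L}^{\sigma}(x(s),\lambda^*)-\mathcal{L}^{\sigma}(x^*,\lambda^*))\,ds<+\infty$); the second is nonnegative, hence only helps the upper bound; and the third is controlled by $\|\lambda^*\|$ times the bounded quantity $\|\int_{t_0}^t(Ax(s)-b)\,ds\|$ from the first step. Thus the whole integral is bounded above, and dividing by $t-t_0$ gives the $\mathcal{O}(1/t)$ estimate.

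Finally, the matching lower bound follows from the saddle-point inequality \eqref{eq_Larg}: since $\mathcal{L}(\bar{x}(t),\lambda^*)\ge\mathcal{L}(x^*,\lambda^*)=f(x^*)$, I get $f(\bar{x}(t))-f(x^*)\ge-\langle\lambda^*,A\bar{x}(t)-b\rangle\ge-\|\lambda^*\|\,\|A\bar{x}(t)-b\|$, and the feasibility rate already obtained forces this to be $\ge-\mathcal{O}(1/t)$. Combining the two one-sided estimates gives $|f(\bar{x}(t))-f(x^*)|=\mathcal{O}(1/t)$. I expect the main obstacle to be the upper bound: everything hinges on the fact that it is the \emph{plain} integral $\int(\mathcal{L}^{\sigma}(x,\lambda^*)-\mathcal{L}^{\sigma}(x^*,\lambda^*))\,ds$ that is finite (not a weighted or growing version), together with the uniform boundedness of the feasibility integral from Lemma \ref{cor_hx1}; were either of these to fail, the ergodic averaging would not produce a $1/t$ decay.
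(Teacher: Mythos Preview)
Your proof is correct and follows essentially the same route as the paper: the feasibility bound via Lemma \ref{cor_hx1} is identical, and the objective bound rests on the same finite integral from Theorem \ref{th_dy_unpertu}(i) together with the feasibility rate. The only cosmetic difference is that the paper applies Jensen's inequality directly to the convex function $\mathcal{L}(\cdot,\lambda^*)$ to obtain $\mathcal{L}(\bar{x}(t),\lambda^*)-\mathcal{L}(x^*,\lambda^*)=\mathcal{O}(1/t)$ in one stroke, and only then splits off the $\langle\lambda^*,A\bar{x}(t)-b\rangle$ term, whereas you apply Jensen to $f$ alone and handle the three pieces of the integrand separately; both arrive at the same place.
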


\begin{proof}
 Since  $\beta(t)\equiv \beta>0$ and   $\gamma\delta>1$, \eqref{eq_th1_beta_t} is automatically  satisfied. It follows from Theorem \ref{th_dy_unpertu}$(i)$ and $\sigma>0$ that
\[\int^{+\infty}_{t_0}\mathcal{L}(x(t),\lambda^*)-\mathcal{L}(x^*,\lambda(t))dt<+\infty.\]
Notice that $\mathcal{L}(x^*,\lambda(t))=\mathcal{L}(x^*,\lambda^*)$. By the convexity of $ \mathcal{L}(\cdot,\lambda^*)$, we get
\begin{eqnarray*}
	&&\mathcal{L}(\bar{x}(t),\lambda^*)-\mathcal{L}(x^*,\lambda^*) \\
	&&\qquad\leq \frac{1}{t-t_0}\int^{t}_{t_0}\mathcal{L}(x(s),\lambda^*)-\mathcal{L}(x^*,\lambda^*)ds\\
	&&\qquad\leq \frac{1}{t-t_0}\int^{+\infty}_{t_0}\mathcal{L}(x(s),\lambda^*)-\mathcal{L}(x^*,\lambda(t))ds.
\end{eqnarray*}
This yields 
\begin{equation}\label{114_1}
	\mathcal{L}(\bar{x}(t),\lambda^*)-\mathcal{L}(x^*,\lambda^*)=\mathcal{O}(1/t).
\end{equation}
It follows from Lemma \ref{cor_hx1} that
\begin{equation}\label{hx_420_1}
	 \sup_{t\geq t_0} \beta\left\|Ax(t)-b+\frac{1}{\delta}\int^{t}_{t_0}(Ax(s)-b)ds\right\|<+\infty.
\end{equation}
By  Theorem \ref{th_dy_unpertu}, $x(t)$ is bounded, and then $\sup_{t\geq t_0} \|Ax(t)-b\|<+\infty$. Using  the triangle inequality, from \eqref{hx_420_1} we get
\[\sup_{t\geq t_0}\left\|\int^{t}_{t_0} (Ax(s)-b)ds\right\|<+\infty.\]
Since
\[\|A\bar{x}(t)-b\| = \frac{1}{t-t_0}\left\|\int^{t}_{t_0} (Ax(s)-b)ds\right\|,\]
we get 
\begin{equation}\label{114_2}
	\|A\bar{x}(t)-b\|= \mathcal{O}(1/t).
\end{equation}
Since $(x^*,\lambda^*)\in\Omega$, by the triangle inequality we have
\[ |f(\bar{x}(t))-f(x^*)|\leq\mathcal{L}(\bar{x}(t),\lambda^*)-\mathcal{L}(x^*,\lambda^*)+  \|\lambda^*\|\|A\bar{x}(t)-b\|. \]
This together with \eqref{114_1} and \eqref{114_2} implies
 \[ |f(\bar{x}(t))-f(x^*)|=\mathcal{O}(1/t).\]
\end{proof}

Remark that  $\mathcal{O}(1/t)$ ergodic convergence rate results on the heavy ball dynamical system and algorithm can be found in \cite{Ghadimi2015}.
In Theorem \ref{th_dy_unpertu},  we  can only obtain an $\mathcal{O}(1/\sqrt{\beta(t)})$   convergence rate of the objection function in the case $\lim_{t\to+\infty}\beta(t)=+\infty$  by using
\[|f({x}(t))-f(x^*)|\leq \mathcal{L}({x}(t),\lambda^*)-\mathcal{L}(x^*,\lambda(t)) + \|\lambda^*\|\|A{x}(t)-b\|.\]
Next, we shall derive an improved convergence rate when the  scaling coefficient satisfies  $\dot{\beta}(t)=\frac{1}{\delta}\beta(t)$. In this case, it is easy to verify that
\[\beta(t)=\mu e^{{t}/{\delta}}\quad \text{ with }\quad  \mu=e^{-{t_0}/{\delta}}.\]

\begin{theorem}\label{th_thex1}
	Assume that $\beta(t)=\mu e^{{t}/{\delta}}$ with $\mu>0$ and $\gamma\delta>1$. Let $(x(t),\lambda(t))$ be a global solution of the dynamical system \eqref{dy_unpertu} and  $(x^*,\lambda^*)\in\Omega$. Then
\[ |f(x(t))-f(x^*)| =\mathcal{O}\left(\frac{1}{ e^{{t}/{\delta}}}\right),\quad \|Ax(t)-b\| =\mathcal{O}\left(\frac{1}{{e^{{t}/{\delta}}}}\right).\]
\end{theorem}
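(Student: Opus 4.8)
The plan is to deduce the exponential rates entirely from the earlier results, exploiting the special identity $\dot{\beta}(t)=\frac{1}{\delta}\beta(t)$ enjoyed by the choice $\beta(t)=\mu e^{t/\delta}$. First I would check the hypotheses: differentiating gives $\dot{\beta}(t)=\frac{\mu}{\delta}e^{t/\delta}=\frac{1}{\delta}\beta(t)$, so the scaling condition \eqref{eq_th1_beta_t} holds as an equality together with $\gamma\delta>1$ (i.e. $\frac{1}{\delta}<\gamma$), and clearly $\lim_{t\to+\infty}\beta(t)=+\infty$. Hence both Theorem \ref{th_dy_unpertu} and Lemma \ref{cor_hx1} are applicable to the given global solution.

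The heart of the argument is the feasibility estimate. In Lemma \ref{cor_hx1} the weight $\frac{1}{\delta}\beta(s)-\dot{\beta}(s)$ that multiplies the integral term vanishes identically for this $\beta$, so the lemma collapses to $\sup_{t\geq t_0}\|\beta(t)(Ax(t)-b)\|<+\infty$. This immediately yields $\|Ax(t)-b\|=\mathcal{O}(1/\beta(t))=\mathcal{O}(e^{-t/\delta})$, which is exactly the claimed feasibility rate and is sharper than the $\mathcal{O}(1/\sqrt{\beta(t)})$ bound recorded in Theorem \ref{th_dy_unpertu}(iii). This is the step I expect to be decisive: the whole upgrade to an exponential rate rests on the exact cancellation of the integral term, and without it one only recovers the square-root rate.

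For the objective I would start from Theorem \ref{th_dy_unpertu}(iii), which gives $\mathcal{L}(x(t),\lambda^*)-\mathcal{L}(x^*,\lambda(t))=\mathcal{O}(1/\beta(t))$. Recalling from the proof of Theorem \ref{th_dy_unpertu} that $\mathcal{L}(x^*,\lambda(t))=\mathcal{L}(x^*,\lambda^*)=f(x^*)$ (since $Ax^*=b$), and expanding $\mathcal{L}(x(t),\lambda^*)=f(x(t))+\langle\lambda^*,Ax(t)-b\rangle$, this relation reads $f(x(t))-f(x^*)+\langle\lambda^*,Ax(t)-b\rangle=\mathcal{O}(1/\beta(t))$. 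I would then isolate the objective gap and control the cross term by the feasibility estimate just obtained, via
\[ |f(x(t))-f(x^*)|\leq |\mathcal{L}(x(t),\lambda^*)-\mathcal{L}(x^*,\lambda(t))| + \|\lambda^*\|\,\|Ax(t)-b\|. \]
Both terms on the right-hand side are $\mathcal{O}(1/\beta(t))$, so $|f(x(t))-f(x^*)|=\mathcal{O}(1/\beta(t))=\mathcal{O}(e^{-t/\delta})$, which finishes the proof. The remaining manipulations are routine, and no convexity or regularity beyond what Theorem \ref{th_dy_unpertu} already assumes is required.
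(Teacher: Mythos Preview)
Your proposal is correct and follows essentially the same route as the paper: verify that $\dot\beta(t)=\frac{1}{\delta}\beta(t)$ makes the integral term in Lemma~\ref{cor_hx1} vanish to get $\|Ax(t)-b\|=\mathcal{O}(1/\beta(t))$, then combine Theorem~\ref{th_dy_unpertu}(iii) with the triangle inequality $|f(x(t))-f(x^*)|\leq \mathcal{L}(x(t),\lambda^*)-\mathcal{L}(x^*,\lambda(t))+\|\lambda^*\|\|Ax(t)-b\|$ to conclude. Your write-up is, if anything, slightly more detailed than the paper's in checking the hypotheses and unpacking the Lagrangian.
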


\begin{proof}
Since $\dot{\beta}(t)=\frac{1}{\delta}\beta(t)$, the assumptions of Theorem \ref{th_dy_unpertu}  hold. By Lemma \ref{cor_hx1}, 
	\[ \sup_{t\geq t_0} \left\|\beta(t)(Ax(t)-b)\right\|<+\infty,\]
which yields
\[ \|Ax(t)-b\|=\mathcal{O}\left(\frac{1}{\beta(t)}\right)=\mathcal{O}\left(\frac{1}{{e^{{t}/{\delta}}}}\right). \]
By Theorem \ref{th_dy_unpertu}  $(iii)$ and using the triangle inequality, we get
\begin{eqnarray*}
	&&|f({x}(t))-f(x^*)|\\
	&&\qquad \leq \mathcal{L}({x}(t),\lambda^*)-\mathcal{L}(x^*,\lambda(t)) + \|\lambda^*\|\|A{x}(t)-b\|\\
	&&\qquad=\mathcal{O}\left(\frac{1}{{e^{{t}/{\delta}}}}\right).
\end{eqnarray*}
\end{proof}

\begin{remark}
For the second-order dynamical system in \cite{AttouchFC2021}, under the same parameter setting as in Theorem \ref{th_thex1}, \cite[Proposition 1]{AttouchFC2021} showed the convergence rates of  the feasibility violation $\|Ax(t)-b\|$ and the objective function error  $|f(x(t))-f(x^*)|$ are both  $\mathcal{O}(\frac{1}{{e^{{t}/{2\delta}}}})$. As a comparison, we have shown in Theorem \ref{th_thex1}  that  both  the feasibility violation and the objective function error  associated with  the dynamical system \eqref{dy_unpertu}  achieve $\mathcal{O}(\frac{1}{{e^{{t}/{\delta}}}})$.
\end{remark}

\section{The perturbed case}
In this section, we consider the following perturbed version of the dynamical system \eqref{dy_unpertu}:
\begin{equation}\label{dy_pertu}
	\begin{cases}
		\ddot{x}(t)+\gamma\dot{x}(t)& = -\beta(t)(\nabla f(x(t))\\
		&\quad +A^T\lambda(t)+\sigma A^T(Ax(t)-b))+\epsilon(t),\\
		\dot{\lambda}(t) &= \beta(t)(A(x(t)+\delta\dot{x}(t))-b),
	\end{cases}
\end{equation}
where $\epsilon: [t_0,+\infty)\to\mathbb{R}^n$ can be interpreted as a kind of disturbance. Here, we adopt the  terminology ``perturbation'' used by Attouch et al. \cite{AttouchCPR2018, AttouchCRF2019}.

To avoid repeating the proof,  we take for granted the existence and uniqueness of a global solution of  \eqref{dy_pertu}.  We shall show that the  convergence  properties established in Theorem \ref{th_dy_unpertu} - Theorem \ref{th_thex1} are preserved for  \eqref{dy_pertu} as $\epsilon(t)$ decays rapidly.

\begin{lemma}\cite[Lemma A.5]{Brezis1973}\label{le:A1}
	Let $\nu:[t_0,T]\to [0,+\infty)$ be integrable and $M\geq 0$. Suppose that $\mu:[t_0,T]\to \mathbb{R} $ is continuous and
	\[ \frac{1}{2}\mu(t)^2\leq \frac{1}{2}M^2+\int^{t}_{t_0}\nu(s)\mu(s)ds\]
	for all $t\in[t_0,T]$. Then 
	\[ |\mu(t)|\leq M+\int^t_{t_0}\nu(s)ds, \quad \forall t\in[t_0,T].\]
\end{lemma}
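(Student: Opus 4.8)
The plan is to treat this as a Gronwall-type estimate: I introduce the right-hand side as an auxiliary function and reduce the quadratic inequality to a linear differential inequality for its square root. First I would set $\phi(t) = \frac{1}{2}M^2 + \int_{t_0}^t \nu(s)\mu(s)\,ds$. Since $\nu$ is integrable and $\mu$ is continuous on the compact interval $[t_0,T]$ (hence bounded), the product $\nu\mu$ is integrable, so $\phi$ is absolutely continuous with $\dot\phi(t) = \nu(t)\mu(t)$ for almost every $t$. The hypothesis reads $\frac{1}{2}\mu(t)^2 \le \phi(t)$, which in particular forces $\phi(t)\ge 0$ and yields the pointwise bound $|\mu(t)| \le \sqrt{2\phi(t)}$.

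The key step is to convert $\dot\phi = \nu\mu$ into an inequality controlling $\sqrt{\phi}$. Using $\mu \le |\mu| \le \sqrt{2\phi}$ together with $\nu \ge 0$, one gets $\dot\phi(t) = \nu(t)\mu(t) \le \nu(t)\sqrt{2\phi(t)}$ almost everywhere. The obstacle is that $\sqrt{\phi}$ need not be differentiable, or even locally Lipschitz, at points where $\phi$ vanishes, so dividing through by $\sqrt{2\phi}$ is not yet justified. To circumvent this I would regularize: fix $\varepsilon>0$ and set $\phi_\varepsilon(t) = \phi(t) + \frac{1}{2}\varepsilon^2 \ge \frac{1}{2}\varepsilon^2 > 0$. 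Now $\phi_\varepsilon$ is bounded away from zero, so $s\mapsto\sqrt{2s}$ is Lipschitz on its range and $t\mapsto \sqrt{2\phi_\varepsilon(t)}$ is absolutely continuous, with $\frac{d}{dt}\sqrt{2\phi_\varepsilon(t)} = \frac{\dot\phi(t)}{\sqrt{2\phi_\varepsilon(t)}} \le \frac{\nu(t)\sqrt{2\phi(t)}}{\sqrt{2\phi_\varepsilon(t)}} \le \nu(t)$ almost everywhere, where the last inequality uses $\phi\le \phi_\varepsilon$.

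Finally I would integrate this differential inequality over $[t_0,t]$ and read off the constant from the initial value. Since $\phi_\varepsilon(t_0) = \frac{1}{2}M^2 + \frac{1}{2}\varepsilon^2$, integration gives $\sqrt{2\phi_\varepsilon(t)} \le \sqrt{M^2+\varepsilon^2} + \int_{t_0}^t \nu(s)\,ds$. Combining with $|\mu(t)| \le \sqrt{2\phi(t)} \le \sqrt{2\phi_\varepsilon(t)}$ yields $|\mu(t)| \le \sqrt{M^2+\varepsilon^2} + \int_{t_0}^t \nu(s)\,ds$ for every $\varepsilon>0$, and letting $\varepsilon \to 0^+$ produces the claimed bound $|\mu(t)| \le M + \int_{t_0}^t \nu(s)\,ds$.

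The only genuine difficulty is the non-differentiability of the square root at zero; it is handled cleanly by the $\varepsilon$-shift and the final passage to the limit, and everything else (absolute continuity of $\phi$, the chain rule for $\sqrt{2\phi_\varepsilon}$, and the integration) is routine. This argument reproduces the standard proof of \cite[Lemma A.5]{Brezis1973} and is exactly the form needed later to absorb the perturbation term $\epsilon(t)$ into a Gronwall estimate for the system \eqref{dy_pertu}.
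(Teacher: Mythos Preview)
Your argument is correct and is precisely the classical proof of this Gronwall-type inequality as given in \cite[Lemma A.5]{Brezis1973}. Note that the paper itself does not supply a proof of this lemma; it merely cites Brezis and uses the result in the proof of Theorem~\ref{th_dy_pertu}, so there is nothing in the paper to compare against beyond the original source, which your proposal faithfully reproduces.
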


\begin{theorem}\label{th_dy_pertu}
Assume that $\epsilon:[t_0,+\infty)\to\mathbb{R}^n$ is an  integrable function such that
\begin{equation*}
\int^{+\infty}_{t_0}\|\epsilon(t)\|dt<+\infty
\end{equation*} and the scaling condition \eqref{eq_th1_beta_t} holds. Let $(x(t), \lambda(t))$ be a global solution of the dynamical system \eqref{dy_pertu} and $(x^*,\lambda^*)\in\Omega$. Then the trajectory $(x(t),\lambda(t))$ is bounded on $[t_0,+\infty)$ and
\begin{itemize}
	\item [(i)]$\int^{+\infty}_{t_0}(\frac{1}{\delta}\beta(t)-\dot{\beta}(t))(\mathcal{L}^{\sigma}(x(t),\lambda^*)-\mathcal{L}^{\sigma}(x^*,\lambda(t))) dt <+\infty$.
	\item [(ii)] $\int^{+\infty}_{t_0}\beta(t)\|Ax(t)-b\|^2 dt <+\infty$,\quad  $\int^{+\infty}_{t_0}\|\dot{x}(t)\|^2dt <+\infty$.
	\item [(iii)] When $\beta(t)\equiv\beta>0$:
 \[|f(\bar{x}(t))-f(x^*)|=\mathcal{O}\left(\frac{1}{t}\right),\quad \|A\bar{x}(t)-b\|= \mathcal{O}\left(\frac{1}{t}\right),\]
 with $\bar{x}(t)=\frac{\int^t_{t_0}x(s)ds}{t-t_0}$.\\
\item [(iv)] 	When $ \lim_{t\to+\infty}\beta(t) = +\infty$:
	\[ \mathcal{L}(x(t),\lambda^*)-\mathcal{L}(x^*,\lambda(t)) =\mathcal{O}\left(\frac{1}{\beta(t)}\right),\]
	and
	\[\|Ax(t)-b\| =\mathcal{O}\left(\frac{1}{\sqrt{\beta(t)}}\right).\]
	\item [(v)] Moreover, when $\beta(t)=\mu e^{{t}/{\delta}}$ with $\mu>0$:
	 \[ |f(x(t))-f(x^*)| =\mathcal{O}\left(\frac{1}{ e^{{t}/{\delta}}}\right)\]
	 and
	 \[\|Ax(t)-b\| =\mathcal{O}\left(\frac{1}{{e^{{t}/{\delta}}}}\right).\]
\end{itemize}\end{theorem}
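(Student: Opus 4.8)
The plan is to reuse, verbatim, the energy function $\mathcal{E}(t)=\mathcal{E}_0(t)+\mathcal{E}_1(t)$ from \eqref{eq_th1_1} and to track the single extra term that the disturbance $\epsilon(t)$ produces. Since $\epsilon(t)$ enters only the primal equation of \eqref{dy_pertu}, repeating the computation of $\dot{\mathcal{E}}_1(t)$ with $\ddot{x}(t)=-\gamma\dot{x}(t)-\beta(t)(\nabla f(x(t))+A^T\lambda(t)+\sigma A^T(Ax(t)-b))+\epsilon(t)$ reproduces the right-hand side of \eqref{eq_th1_6} plus the inner product $\langle \tfrac{1}{\delta}(x(t)-x^*)+\dot{x}(t),\epsilon(t)\rangle$. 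Under the scaling condition \eqref{eq_th1_beta_t} the original right-hand side is nonpositive, so after bounding the extra term by Cauchy--Schwarz and using $\|\tfrac{1}{\delta}(x(t)-x^*)+\dot{x}(t)\|^2\le 2\mathcal{E}_1(t)\le 2\mathcal{E}(t)$ (valid because $\delta\gamma-1>0$ and $\mathcal{E}_0(t)\ge 0$), I obtain the differential inequality $\dot{\mathcal{E}}(t)\le \sqrt{2\mathcal{E}(t)}\,\|\epsilon(t)\|$.

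The boundedness then follows from the Brezis-type estimate of Lemma \ref{le:A1}. Integrating and setting $\mu(t)=\sqrt{2\mathcal{E}(t)}$ gives $\tfrac{1}{2}\mu(t)^2\le \tfrac{1}{2}\big(\sqrt{2\mathcal{E}(t_0)}\big)^2+\int_{t_0}^t\|\epsilon(s)\|\mu(s)\,ds$, and Lemma \ref{le:A1} with $M=\sqrt{2\mathcal{E}(t_0)}$ and $\nu=\|\epsilon\|$ yields $\mu(t)\le \sqrt{2\mathcal{E}(t_0)}+\int_{t_0}^{+\infty}\|\epsilon(s)\|\,ds<+\infty$, since $\epsilon$ is integrable. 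Thus $\mathcal{E}(t)$ is bounded by a constant $C$, and exactly as in Theorem \ref{th_dy_unpertu} the coercive terms of $\mathcal{E}_1(t)$ give the boundedness of $(x(t),\dot{x}(t),\lambda(t))$ on $[t_0,+\infty)$.

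For (i)--(ii) I would rewrite the inequality as $(\tfrac{1}{\delta}\beta-\dot\beta)(\mathcal{L}^\sigma(x,\lambda^*)-\mathcal{L}^\sigma(x^*,\lambda^*))+\tfrac{\sigma\beta}{2\delta}\|Ax-b\|^2+\tfrac{\delta\gamma-1}{\delta}\|\dot{x}\|^2\le -\dot{\mathcal{E}}(t)+\sqrt{2C}\,\|\epsilon(t)\|$ and integrate over $[t_0,+\infty)$; the right-hand side is finite since $\mathcal{E}\ge 0$ and $\epsilon$ is integrable, while each summand on the left is nonnegative under \eqref{eq_th1_beta_t}, so all three integrals are finite (using $\mathcal{L}^\sigma(x^*,\lambda(t))=\mathcal{L}^\sigma(x^*,\lambda^*)$ as in Theorem \ref{th_dy_unpertu}). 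Items (iii)--(v) then follow by importing the unperturbed arguments almost unchanged: the crucial observation is that the dual equation in \eqref{dy_pertu} is \emph{identical} to the one in \eqref{dy_unpertu}, so the partial-integration identity behind Lemma \ref{cor_hx1} is untouched and, given the boundedness of $\lambda(t)$ just established, Lemma \ref{cor_hx1} holds verbatim for \eqref{dy_pertu}. With that, (iii) repeats the ergodic argument of Theorem \ref{re_th1_3}; (iv) uses $\mathcal{E}_0(t)\le\mathcal{E}(t)\le C$ to read off $\mathcal{L}^\sigma(x,\lambda^*)-\mathcal{L}^\sigma(x^*,\lambda^*)=\mathcal{O}(1/\beta(t))$ and then splits off the term $\tfrac{\sigma}{2}\|Ax-b\|^2\ge 0$ to obtain the feasibility rate; and (v) specializes to $\dot\beta=\tfrac{1}{\delta}\beta$, where the integral term in Lemma \ref{cor_hx1} vanishes, exactly as in Theorem \ref{th_thex1}.

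The only genuinely new difficulty is closing the boundedness step: because the perturbed $\mathcal{E}(t)$ is no longer monotone, I cannot invoke $\mathcal{E}(t)\le\mathcal{E}(t_0)$, and the extra term $\langle \tfrac{1}{\delta}(x-x^*)+\dot{x},\epsilon\rangle$ is bounded by a quantity involving $\mathcal{E}(t)$ itself. Recognizing that this self-referential estimate is precisely the hypothesis of the Brezis lemma (Lemma \ref{le:A1})---so that integrability of $\|\epsilon\|$ converts it into a uniform bound on $\mathcal{E}(t)$---is the key point; once $\mathcal{E}(t)$ is known to be bounded rather than nonincreasing, every subsequent estimate degrades only by a harmless multiplicative constant and all the stated rates are preserved.
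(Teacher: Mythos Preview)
Your proposal is correct and follows essentially the same route as the paper: the same energy function $\mathcal{E}(t)$, the same identification of the single extra term $\langle \tfrac{1}{\delta}(x-x^*)+\dot{x},\epsilon\rangle$, the Brezis estimate (Lemma \ref{le:A1}) to recover a uniform bound on $\mathcal{E}$, and then a verbatim repetition of the unperturbed arguments (including Lemma \ref{cor_hx1}, which is unchanged since the dual equation is unaffected by $\epsilon$). The only cosmetic difference is that the paper absorbs the extra term into an auxiliary function $\mathcal{E}^{\epsilon}(t)=\mathcal{E}(t)-\int_{t_0}^{t}\langle \tfrac{1}{\delta}(x-x^*)+\dot{x},\epsilon\rangle\,ds$ and applies Lemma \ref{le:A1} with $\mu(t)=\|\tfrac{1}{\delta}(x-x^*)+\dot{x}(t)\|$, whereas you apply it directly with $\mu(t)=\sqrt{2\mathcal{E}(t)}$; the two packagings are equivalent.
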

\begin{proof}
Define the energy  function  $\mathcal{E}^{\epsilon}:[t_0,+\infty)\to\mathbb{R}$ by
\begin{eqnarray}\label{eq_th2_1}
	\mathcal{E}^{\epsilon}(t) =\mathcal{E}(t)- \int^t_{t_0}\langle \frac{1}{\delta}(x(s)-x^*)+\dot{x}(s),\epsilon(s)\rangle ds,
\end{eqnarray}
where $\mathcal{E}(t)$ is defined as in \eqref{eq_th1_1}.
By similar arguments as in the proof of Theorem \ref{th_dy_unpertu}, we have
\begin{eqnarray}\label{eq_th2_2}
	&&\dot{\mathcal{E}}^{\epsilon}(t) \leq (\dot{\beta}(t)-\frac{1}{\delta}\beta(t))(\mathcal{L}^{\sigma}(x(t),\lambda^*)-\mathcal{L}^{\sigma}(x^*,\lambda(t)))\nonumber\\
	&&\quad-\frac{\sigma\beta(t)}{2\delta}\|Ax(t)-b\|^2-\frac{\delta\gamma-1}{\delta}\|\dot{x}(t)\|^2
\end{eqnarray}
for any $t\geq t_0$. By \eqref{eq_th2_2} and \eqref{eq_th1_beta_t},  $\mathcal{E}^{\epsilon}(t)$  is nonincreasing on $[t_0,+\infty)$ and then
\begin{equation*}\label{eq_th2_3}
	\mathcal{E}^{\epsilon}(t)\leq  \mathcal{E}^{\epsilon}(t_0),\quad \forall t\in[ t_0,+\infty).
\end{equation*}
This together with  \eqref{eq_th2_1} implies
\begin{equation}\label{eq_th2_4}
	\mathcal{E}(t)\leq  \mathcal{E}^{\epsilon}(t_0)+\int^t_{t_0}\langle \frac{1}{\delta}(x(s)-x^*)+\dot{x}(s),\epsilon(s)\rangle ds
	\end{equation}
for any $t\in[ t_0,+\infty)$. By the definition of $\mathcal{E}(t)$ and the Cauchy-Schwarz inequality, from \eqref{eq_th2_4} we have
\begin{eqnarray*}\label{eq_th2_5}
	&&\frac{1}{2}\|\frac{1}{\delta}(x(t)-x^*)+\dot{x}(t)\|^2\leq \mathcal{E}(t)\\
	&&\quad\leq |\mathcal{E}^{\epsilon}(t_0)|+\int^t_{t_0}\|\frac{1}{\delta}(x(s)-x^*)+\dot{x}(s)\|\cdot\|\epsilon(s)\| ds
\end{eqnarray*}
for any $t\geq t_0$.
Applying Lemma \ref{le:A1} with $\mu(t) =\|\frac{1}{\delta}(x(t)-x^*)+\dot{x}(t)\|$ to the above inequality, we get
\begin{equation*}\label{eq_th2_6}
  \|\frac{1}{\delta}(x(t)-x^*)+\dot{x}(t)\| \leq  \sqrt{2|\mathcal{E}^{\epsilon}(t_0)|}+\int^t_{t_0}\|\epsilon(s)\| ds
\end{equation*}
for any $t\geq t_0$.
This together with $\int^{+\infty}_{t_0}\|\epsilon(s)\| ds<+\infty$ yields
\begin{equation*}\label{eq_th2_7}
	 \sup_{t\geq t_0}\|\frac{1}{\delta}(x(t)-x^*)+\dot{x}(t)\| <  +\infty.
\end{equation*}
Since  $\mathcal{E}(t)\geq 0$ for all $t\geq t_0$, it follows from \eqref{eq_th2_1} and \eqref{eq_th2_4} that
\begin{eqnarray*}\label{eq_th2_8}
	\inf_{t\geq t_0 }\mathcal{E}^{\epsilon}(t)&\geq& -\sup_{t\geq t_0}\|\frac{1}{\delta}(x(t)-x^*)+\dot{x}(t)\|\int^{+\infty}_{t_0}\|\epsilon(s)\| ds \\
	&>& -\infty
\end{eqnarray*}
and
\begin{eqnarray*}\label{eq_th2_9}
	&&\sup_{t\geq t_0 }\mathcal{E}(t) \\
	&& \leq \mathcal{E}^{\epsilon}(t_0)+\sup_{t\geq t_0}\|\frac{1}{\delta}(x(t)-x^*)+\dot{x}(t)\|\int^{+\infty}_{t_0}\|\epsilon(s)\| ds \\
	&&< +\infty.
\end{eqnarray*}
This means that  $\mathcal{E}^{\epsilon}(t)$ and $\mathcal{E}(t)$ are both bounded on $[t_0,+\infty)$. It yields that $(x(t),\lambda(t))$ is bounded. Integrating \eqref{eq_th2_2} on $[t_0,+\infty)$, we  obtain $(i)$ and $(ii)$. The rest of the proof is similar to the one in Theorem \ref{th_dy_unpertu}, Theorem \ref{re_th1_3}, Theorem \ref{th_thex1},  and so we omit it.
\end{proof}

\section{Conclusions}
In this paper, we propose an inertial primal-dual  dynamical system with time scaling for the problem \eqref{question}, which includes a second-order ODE for the prime variable  and a first-order ODE for the dual variable.  Under a scaling condition, we prove that the proposed dynamical system and its perturbed version enjoy a  convergence property: $\mathcal{L}(x(t),\lambda^*)-\mathcal{L}(x^*,\lambda(t)) =\mathcal{O}(\frac{1}{\beta(t)})$.  In the case  $\beta(t)=\mu e^{{t}/{\delta}}$, we derive an improved exponential  convergence rate: $|f(x(t))-f(x^*)| =\mathcal{O}(\frac{1}{ e^{{t}/{\delta}}})$.  The  convergence results of our approaches is realized without strong convexity of the objective function.

\section*{Acknowledgments}
The authors would like to thank the reviewers and the editors for their helpful comments and suggestions, which improve the quality of this paper.

\end{document}